\documentclass[conference]{IEEEtran}
\IEEEoverridecommandlockouts

\usepackage{cite}
\usepackage{amsmath,amssymb,amsfonts}
\usepackage{algorithmic}
\usepackage{graphicx}
\usepackage{textcomp}
\usepackage{xcolor}
\usepackage{mathtools}
\usepackage{bm}            
\usepackage{mathrsfs}      
\usepackage{amsthm}        

\newtheorem{definition}{Definition}
\newtheorem{theorem}{Theorem}

\newtheorem{lemme}{Lemma}

\newcommand{\R}{\mathbb{R}}
\newcommand{\eqas}[1]{\displaystyle\mathop{ \stackrel{a.s.}{=}}_{#1}}
\newcommand{\geqas}[1]{\displaystyle\mathop{ \stackrel{a.s.}{\geq}}_{#1}}

\newcommand{\subsetas}[1]{\displaystyle\mathop{ \stackrel{a.s.}{\subset}}_{#1}}
\newcommand{\middlesqueezeequ}{\medmuskip=1.5mu \thinmuskip=0.5mu \thickmuskip=2.5mu \nulldelimiterspace=-0.5pt \scriptspace=0pt}
\newcommand{\middlesqueezeequAggressive}{%
  \thinmuskip=0mu
  \medmuskip=0.4mu
  \thickmuskip=0.8mu
  \nulldelimiterspace=-1pt
  \scriptspace=0pt
}
\newcommand{\middlesqueezeequBrutal}{%
  \thinmuskip=0mu
  \medmuskip=0mu
  \thickmuskip=0.3mu
  \nulldelimiterspace=-1.5pt
  \scriptspace=0pt
}

\def\BibTeX{{\rm B\kern-.05em{\sc i\kern-.025em b}\kern-.08em
    T\kern-.1667em\lower.7ex\hbox{E}\kern-.125emX}}

\begin{document}

\title{Equivalence of optimal transport problems to regularization on the family of f-divergences\\
\thanks{This work is supported in part by the European Commission through the H2020-MSCA-RISE-2019 project 872172; the French National Agency for Research (ANR) through the Project ANR-21-CE25-0013 and the project ANR-22-PEFT-0010 of the France 2030 program PEPR Réseaux du Futur; and in part by the Agence de l'innovation de défense (AID) through the project UK-FR 2024352.}
}

\author{
\IEEEauthorblockN{
Maxime Nicaise\IEEEauthorrefmark{1},
Yaiza Bermudez\IEEEauthorrefmark{1},
 and Samir M. Perlaza\IEEEauthorrefmark{1}\IEEEauthorrefmark{2}\IEEEauthorrefmark{4}
}

\IEEEauthorblockA{
Emails: maxime.nicaise@student-cs.fr, \{yaiza.bermudez, samir.perlaza\}@inria.fr
}

\IEEEauthorblockA{\IEEEauthorrefmark{1}%
Centre Inria d'Universit\'{e} Côte d'Azur, INRIA, Sophia Antipolis, France}

\IEEEauthorblockA{\IEEEauthorrefmark{2}%
Laboratoire GAATI, Universit\'{e} de la Polyn\'{e}sie fran\c{c}aise,  Fa`a`\={a}, French Polynesia}

\IEEEauthorblockA{\IEEEauthorrefmark{4}%
ECE Dept. Princeton University, Princeton, 08544 NJ, USA}
}


\maketitle

\begin{abstract}
This work establishes that an optimal transport~(OT) problem regularized by a given $f$-divergence admits the same solution as another OT problem regularized by a different $g$-divergence, under an appropriate transformation of the cost function. This structural equivalence between OT problems regularized by distinct divergences, in the sense of sharing the same unique minimizer, is demonstrated within the framework of Polish spaces with bounded cost functions.
\end{abstract}

\begin{IEEEkeywords}
Optimal Transport, Optimization, Regularization.
\end{IEEEkeywords}
\section{Introduction}
Optimal transport (OT) is established as a fundamental tool for comparing probability distributions, with applications ranging from machine learning to computer graphics \cite{wassersteinautoencoders,wassersteingan,domainadaptation,learningwassersteinloss,computergraphics,IEEEoptimalmachine}. The choice of a suitable discrepancy measure between distributions is essential for tasks such as regression, classification, and generative modeling. In this respect, OT-based distances offer a significant advantage over classical divergences, such as the Kullback--Leibler divergence or other $f$-divergences, as demonstrated in \cite{wassersteingan} and \cite{learningwassersteinloss}. Indeed, the topology induced through Wasserstein distances is weaker and better adapted to comparing measures with non-overlapping supports or those supported on lower-dimensional sets. See \cite{wassersteingan} and \cite{villani}. This property makes OT particularly attractive for modern data analysis. 

Despite these appealing properties, from a computational perspective, solving the Kantorovich formulation of optimal transport involves tackling a large-scale linear program. This task often becomes intractable for high-dimensional or large-sample problems, as shown in \cite{computationaloptitranbsoort}. To mitigate the computational complexity of the original problem and improve the sample complexity of the estimators, regularization is commonly employed. This ranges from classic entropic regularization to structured \cite{domainadaptation} or quadratic regularizers \cite{blondel2018smoothsparseoptimaltransport}. In particular, classic entropic regularization leads to the celebrated Sinkhorn algorithm \cite{cuturi2013sinkhorndistanceslightspeedcomputation}. However, more recent developments highlight the benefits of broadening this framework to general $f$-divergences \cite{dimarino2020optimaltransportlossessinkhorn,terjek}. These schemes yield desirable properties such as sparsity or robustness to outliers. Parallel to these advances in optimal transport, similar variational problems are extensively studied within the field of statistical learning theory, particularly within the statistical empirical risk minimization (ERM)  framework \cite{pacbayes,Perlaza_2024,daunas:hal-04110899,daunas_equival}. Specifically, theoretical equivalence between variational problems regularized with two different $f$-divergences is established in \cite{daunas_equival}, \cite{daunas:hal-04258765} and \cite{daunas_asymmetry}. Therein, it is proved that an ERM problem regularized by a specific $f$-divergence is reformulable using a different divergence and a suitably transformed cost function, yielding the exact same minimizer.

This work establishes an equivalence between optimal transport problems with $f$-divergence regularizations, which is analogous to \cite[Theorem~2]{daunas_equival} in the context of ERM with $f$-divergence regularizations. More specifically, in this work, it is proved that two optimal transport problems regularized by different divergences admit the same unique optimal coupling, up to a suitable transformation of the cost function. This structural equivalence between OT problems is demonstrated within the framework of Polish spaces with bounded cost functions. Under these general assumptions, the existence of optimal potentials for the dual problem and the uniqueness of the optimal coupling for the primal problem are rigorously established. Furthermore, a closed-form expression for the optimal coupling is obtained.

\subsection{Notations and preliminaries}
\textcolor{black}{Given a  function~$f: \mathcal{X} \to \mathcal{Y}$, its effective domain is denoted by  $\operatorname{dom}(f) \stackrel{\triangle}{=} \{x \in \mathcal{X}: f(x) < +\infty\}$; and its range is denoted by $\operatorname{Ran}(f) \stackrel{\triangle}{=} \left\lbrace f(x) : x \in \mathcal{X} \right\rbrace$. Given a set $\mathcal{A}$, its interior is denoted by $\operatorname{int}(\mathcal A)$. 
Given two measures $\mu$ and $\nu$, the notation $\mu \ll \nu$ signifies that $\mu$ is absolutely continuous with respect to $\nu$.} In the following, $\mathcal{X}$ and $\mathcal{Y}$ are Polish spaces.
The set of probability measures over $\mathcal{X}$ equipped with the $\sigma$-field of Borel sets is denoted by $\triangle(\mathcal{X})$.
The set of \textcolor{black}{$p$-integrable functions with respect to $P_X$} is denoted by $\operatorname{L}^p(\mathcal{X},P_X)$.
Given two probability measures $P_X \in \triangle(\mathcal{X})$ and $P_Y \in \triangle(\mathcal{Y})$, the notation \textcolor{black}{$\Pi(P_X,P_Y)$ denotes the set of probability measures on $\mathcal{X}\times \mathcal{Y}$ with marginals $P_X$ and $P_Y$ in $\triangle(\mathcal{X})$ and $\triangle(\mathcal{Y})$, respectively.} The probability measures in $\Pi(P_X,P_Y)$ are called couplings.
The product measure of $P_X \in \triangle(\mathcal{X})$ and $P_Y \in \triangle(\mathcal{Y})$ in $\triangle(\mathcal{X}\times\mathcal{Y})$ is denoted by  $\smash{P_XP_Y}$. Given two functions $\smash{f : \mathcal{X}\to\R}$ and $\smash{g : \mathcal{X}\to\R}$, the notation $f \eqas{P_X} g$ means that $f$ and $g$ are almost surely equal with respect to $P_X$. Similarly, the notation $f \geqas{P_X} g$ means that $f$ is greater than $g$ almost surely with respect to $P_X$. Given $\mathcal A$ and $\mathcal B$, two subsets of $\mathcal{X}$, the notation  $\mathcal A \subsetas{P_X} \mathcal{B}$ means that $P_X(\mathcal A \setminus \mathcal B) = 0$. Let $\mathcal E$ be a Banach space and $\mathcal E'$ be its topological dual space.
Given $x \in \mathcal E $ and $y \in \mathcal E'$, $\langle y,x \rangle$ denotes the dual pairing between $y$ and $x$.
If $f : \mathcal{X} \to \R$ and $g : \mathcal{Y} \to \R$ are two real functions, then $f\oplus g : \mathcal{X}\times\mathcal{Y}\to \R$ is the function defined as $f\oplus g (x,y) = f(x)+g(y)$. Analysis of optimality conditions and the derivation of dual problems rely on the transformation of functions into a dual space representation via the convex (or Fenchel) conjugate $\phi^*$, defined in \cite[Section~2.3]{convexanalysis}. For a function $\phi$ on a Banach space $\mathcal E$ with dual $\mathcal E'$, it is defined for $y \in \mathcal E'$ as $\phi^*(y) = \sup_{x \in \mathcal{E}} (\langle y,x \rangle - \phi(x))$. Constraints are handled through the indicator function $\iota_{\mathcal C}$, which assigns a value of $0$ for elements in the feasible set $\mathcal C$ and $+\infty$ otherwise.
\subsection{Assumptions on $\phi$-divergences}
\label{section_sur_phi_divergences}
The purpose of this section is to define the class of $\phi$-divergences endowed with sufficient regularity to guarantee satisfactory theoretical properties to regularize OT problems.
Let $\phi$ be a convex, proper, lower semi-continuous function from $\R$ to $(-\infty,+\infty]$. Throughout this work, the real numbers $a_\phi$ and $b_\phi$ denote the following quantities
\begin{IEEEeqnarray}{rCl}
  \label{definition_de_a_phi}
  a_\phi &\stackrel{\triangle}{=}& \inf \operatorname{dom}(\phi); \ \text{and} \\
    \label{definition_de_b_phi}
  b_\phi &\stackrel{\triangle}{=}& \sup \operatorname{dom}(\phi).
\end{IEEEeqnarray}
\textcolor{black}{As stated in \cite[Lemma~2.1]{legendretype}, the following limits exist}
\begin{IEEEeqnarray}{rCl}
    \label{definition_de_alpha_phi}
      \alpha_\phi &\stackrel{\triangle}{=}& \lim_{x\to-\infty} \frac{\phi(x)}{x}; \ \text{and} \\
    \label{definition_de_beta_phi}
      \beta_\phi &\stackrel{\triangle}{=}& \lim_{x\to+\infty} \frac{\phi(x)}{x}.
\end{IEEEeqnarray}
Note that if $\operatorname{dom}(\phi) = (0,+\infty)$, then by convention $\alpha_\phi = -\infty.$ \textcolor{black}{It is shown in~ \cite[Lemma~2.1]{legendretype} that} the convex conjugate of $\phi$, which is denoted by $\phi^*$, verifies
\begin{gather}
  \label{domaine_de_phi_etoile}
  \operatorname{int}(\operatorname{dom}(\phi^*)) = (\alpha_\phi, \beta_\phi).
\end{gather}
The class of Legendre functions, characterized by strict convexity and essential smoothness, ensures a smooth one-to-one correspondence between primal and dual variables. The following definition characterizes functions of Legendre type.
\begin{definition}[\cite{legendretype},~Definition~2.5]
  A function $\phi$ is said to be of Legendre type, if it is strictly convex on $\operatorname{dom}(\phi)$; differentiable on $\operatorname{int}(\operatorname{dom}(\phi))$; and its derivative ${\phi}':\operatorname{int}(\operatorname{dom}(\phi))\to\R$ satisfies
\begin{IEEEeqnarray}{rCl}
    \lim_{x\to a_\phi} \phi'(x) &=& -\infty, \text{ if } a_\phi>-\infty;
 \ \text{and} \\
    \lim_{x\to b_\phi} \phi'(x) &=& +\infty, \text{ if } b_\phi<+\infty.
\end{IEEEeqnarray}
\end{definition}
The specific class of Legendre functions possesses certain properties regarding conjugation, defined in \cite[Section~2.3]{convexanalysis}, as detailed in the following lemma.
\begin{lemme}[\cite{legendretype},~Lemma~2.6]
  \label{lemme_proprietes_de_legendre}
  If $\phi : \R \to (-\infty,+\infty]$ is of Legendre type, then so is its convex conjugate $\phi^*$. Moreover $\phi':(a_\phi,b_\phi)\to (\alpha_\phi,\beta_\phi)$ and ${\phi^*} ':(\alpha_\phi,\beta_\phi)\to (a_\phi,b_\phi)$ are continuous, strictly increasing and mutually inverse maps.
\end{lemme}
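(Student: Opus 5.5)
The plan is to work on the open intervals first and transfer everything to the conjugate through the Fenchel--Young equality. Set $I=(a_\phi,b_\phi)=\operatorname{int}(\operatorname{dom}(\phi))$ and $J=(\alpha_\phi,\beta_\phi)$, which equals $\operatorname{int}(\operatorname{dom}(\phi^*))$ by (\ref{domaine_de_phi_etoile}). The argument proceeds in three steps: show that $\phi'$ is a strictly increasing continuous bijection $I\to J$; identify its inverse with ${\phi^*}'$; and deduce the Legendre properties of $\phi^*$ from the inverse relation.

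\textbf{Step 1: $\phi'$ is a continuous strictly increasing bijection $I\to J$.}
Since $\phi$ is convex and differentiable on the open interval $I$, its derivative $\phi'$ is nondecreasing and continuous there, by the classical result on derivatives of differentiable convex functions on intervals. Strict convexity upgrades this to strictly increasing, so $\phi'(I)$ is an open interval.

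To identify this interval, use the monotonicity of slopes of a convex function. This gives $\lim_{x\to b_\phi}\phi'(x)=\beta_\phi$ when $b_\phi=+\infty$, since slopes $\phi(x)/x$ and derivatives share the same limit at infinity. When $b_\phi<+\infty$, the limit is $+\infty$ by essential smoothness. In that case $\beta_\phi=+\infty$ as well, because $\phi$ equals $+\infty$ beyond $b_\phi$ and the limit (\ref{definition_de_beta_phi}) should be read through $\phi^*$. The endpoint $a_\phi$ is handled symmetrically, which yields $\phi'(I)=J$.

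\textbf{Step 2: the inverse of $\phi'$ is ${\phi^*}'$.}
For $y=\phi'(x)$, the point $x$ is the unique maximizer of $t\mapsto yt-\phi(t)$. Uniqueness follows from strict convexity, and existence from $y\in\phi'(I)$. Hence $\phi^*(y)=yx-\phi(x)$, so $\partial\phi^*(y)=\{x\}=\{(\phi')^{-1}(y)\}$, using that $\phi$ is proper, lsc and convex, so $\partial\phi^*=(\partial\phi)^{-1}$. A convex function on an open interval whose subdifferential is a singleton everywhere is differentiable there. Therefore ${\phi^*}'=(\phi')^{-1}$ on $J$, and this map is continuous and strictly increasing.

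\textbf{Step 3: $\phi^*$ is of Legendre type.}
Since ${\phi^*}'$ is strictly increasing on $J$, $\phi^*$ is strictly convex on $J$. Points of $\operatorname{dom}(\phi^*)\setminus J$ are at most the endpoints, where strict convexity persists by lower semicontinuity. The boundary limits of ${\phi^*}'$ at $\alpha_\phi,\beta_\phi$ equal $a_\phi,b_\phi$ by inverse monotonicity.

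The remaining case is the essential-smoothness requirement for $\phi^*$. If $\beta_\phi<+\infty$, we must show $b_\phi=+\infty$, i.e.\ that $\partial\phi$ is never empty at a finite boundary point of $\operatorname{dom}(\phi)$. This follows because $\phi'\to+\infty$ there whenever $b_\phi<+\infty$, which would force $\beta_\phi=+\infty$.

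\textbf{Main obstacle.} I expect the delicate part to be the endpoint bookkeeping, namely matching $\lim\phi'$ with $\alpha_\phi,\beta_\phi$ in every case of finite or infinite $a_\phi,b_\phi$. Everything else is standard convex analysis.
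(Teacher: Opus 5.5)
Your proof is correct, but it does not follow the paper, which gives no argument of its own. The paper imports the lemma from \cite{legendretype}, where it is essentially the one-dimensional case of Rockafellar's Legendre conjugacy theorem (\emph{Convex Analysis}, Thm.~26.5), combined with the identification \eqref{domaine_de_phi_etoile} of $\operatorname{int}(\operatorname{dom}(\phi^*))$. You instead argue directly on the line. You show that $\phi'$ is continuous and strictly increasing on $I$, and use asymptotic slopes to get $\phi'(I)=(\alpha_\phi,\beta_\phi)$. You then use that $\partial\phi^*(y)$ is the set of maximizers of $t\mapsto yt-\phi(t)$, which is $\{(\phi')^{-1}(y)\}$, to obtain ${\phi^*}'=(\phi')^{-1}$, and read off the Legendre properties of $\phi^*$. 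The citation buys brevity and validity in $\R^n$. Your route is elementary and derives the range of $\phi'$ directly, so \eqref{domaine_de_phi_etoile} is needed only to recognize that range as the interior of $\operatorname{dom}(\phi^*)$.

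A few justifications need fixing, though none affects the conclusion.

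\textbf{Strict convexity at an endpoint.} Strict convexity of $\phi^*$ at a finite endpoint of $\operatorname{dom}(\phi^*)$ does not come from lower semicontinuity. The correct reason is that a convex function on an interval that fails strict convexity is affine on some nondegenerate segment $[u,v]$ of its domain. It is then affine on $(u,v)\subseteq J$, contradicting strict convexity on $J$. This case is not vacuous: $\phi(x)=x+1/x-2$ is a Legendre-type generator with $\beta_\phi=1$ and $\phi^*(1)=2<+\infty$, so the endpoint belongs to the domain.

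\textbf{The clause about $\partial\phi$.} Your clause ``i.e.\ that $\partial\phi$ is never empty at a finite boundary point'' is backwards: essential smoothness makes $\partial\phi$ empty at such points, so drop it. All you need is $b_\phi<+\infty\Rightarrow\beta_\phi=+\infty$ and $a_\phi>-\infty\Rightarrow\alpha_\phi=-\infty$. These are immediate because $\phi(x)/x=+\infty$ for $x>\max(b_\phi,0)$, and $\phi(x)/x=-\infty$ for $x<\min(a_\phi,0)$. No reading of the limit ``through $\phi^*$'' is required.
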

In the context of this work, the focus is on a specific subset of Legendre functions that are suitable for defining divergences between probability measures, which are defined as follows.
\begin{definition}[Legendre-type generator]
  \label{def_de_P1}
  A Legendre type function $\phi : \R \to (-\infty,+\infty]$ is said to be a Legendre-type generator when $\phi(1) = 0$; $a_\phi=0$ and $b_\phi =+\infty$, with $a_\phi$ in~\eqref{definition_de_a_phi} and $b_\phi$ in~\eqref{definition_de_b_phi}; and its derivative satisfies $\phi'(1) = 0$.
\end{definition}
\noindent
Given a Legendre-type generator $\phi$, it follows from \eqref{domaine_de_phi_etoile} that the interior of the domain of $\phi^*$ is $(-\infty,\beta_\phi)$, and ${\phi^*}' : (-\infty,\beta_\phi) \to (0,+\infty)$ is a one-to-one strictly increasing map, the inverse of which is $\phi' : (0,+\infty) \to (-\infty,\beta_\phi)$.
Note that a Legendre-type generator $\phi$ is always positive, as it is convex and verifies $\phi'(1)= 0$, thus its minimum is achieved at $1$, and the function satisfies $\phi(1) = 0$. Moreover, note that the condition $\phi(1)=\phi'(1)=0$ is not restrictive, as $\phi$-divergences are invariant up to an affine transformation \cite{werner2012fdivergenceconvexbodies}. A comprehensive classification of the common $\phi$-divergences generated by a Legendre-type generator is available in \cite[Tables~1 and 2]{InriaRRnicaise}. The following definition specifies the notion of $\phi$-divergence adopted throughout this work.
\begin{definition}
  \label{definition_phi_divergence}
  Given a Legendre-type generator $\phi$ and two probability measures $P$ and $Q$ defined on the same measurable space, the $\phi$-divergence of $P$ with respect to $Q$ is
  \begin{IEEEeqnarray}{rCl}
    \label{definiton_de_phi_divergence_formule}
    D_\phi(P||Q) = \begin{cases}
      \int \phi(\frac{\mathrm{d}P}{\mathrm{d}Q}(\theta))\mathrm{d}Q(\theta), \  \text{ if } P\ll Q;
\\
      +\infty, \text{ otherwise,}
    \end{cases}
  \end{IEEEeqnarray}
where $\frac{\mathrm{d}P}{\mathrm{d}Q}$ is the Radon-Nikodym derivative of $P$ with respect to $Q$.
\end{definition} 
\noindent
Definition \ref{definition_phi_divergence} is not standard when \textcolor{black}{the function $\phi$} is not assumed to be \textcolor{black}{superlinear (i.e., $\lim_{x\to +\infty} \phi(x)/x = +\infty$)}.
In the typical definition, certain probability measures may yield a finite $\phi$-divergence value even when they are not absolutely continuous with respect to the reference measure;
see~\cite{Csiszar1967, legendretype, AgrawalHorel2020}. However, when a $\phi$-divergence is used as a regularization term in an optimization problem, allowing non–absolutely continuous measures may lead to situations in which the optimizer no longer admits a closed-form expression. This is due to the possible presence of a non-zero singular part, which typically cannot be expressed in closed form, as illustrated in \cite[Theorem~3]{terjek}.
\section{Problem Formulation}
This section establishes the mathematical formulation of optimal transport regularized by a $\phi$-divergence.
The primal problem is introduced, as well as its dual counterpart, and the necessary functional analytic tools.
Throughout this section, the function $\phi$ is a Legendre-type generator (Definition~\ref{def_de_P1}). The measure $P_{XY}$ in $\triangle(\mathcal{X}\times\mathcal{Y})$ is assumed to exhibit given marginal measures denoted by $P_X \in \triangle(\mathcal{X})$ and $P_Y \in \triangle(\mathcal{Y})$. The constant $\lambda$ is a strictly positive real number, and the function
  $\mathsf{c} : \mathcal{X}\times \mathcal{Y} \to \R$ 
is a bounded Borel measurable cost function. These elements constitute the parameters of the optimal transport problem with $\phi$-divergence regularization,  as presented in \cite{dimarino2020optimaltransportlossessinkhorn} and \cite{terjek}.
\subsection{Primal Problem}
The OT problem with $\phi$-divergence regularization is formulated in \cite{dimarino2020optimaltransportlossessinkhorn} and \cite{terjek} as
\begin{IEEEeqnarray}{rCl}
  \label{problem_principal}
  \inf_{P_{XY} \in \Pi(P_X,P_Y)} &&
  \int_{\mathcal{X} \times \mathcal{Y}} \mathsf{c}(x,y)\, \mathrm{d}P_{XY}(x,y)
  \IEEEnonumber \\ && + 
  \lambda\, D_\phi(P_{XY} \,\|\, P_XP_Y).
\end{IEEEeqnarray} 
From Definition~\ref{definition_phi_divergence}, the divergence is infinite whenever $P_{XY}$ is not absolutely continuous with respect to the product of its marginal measures, $P_X$ and $P_Y$.
Consequently, the analysis is restricted to absolutely continuous couplings.
This allows the primal problem to be expressed directly in terms of the Radon-Nikodym derivative of $P_{XY}$ with respect to the product measure $P_X P_Y$, which is a non-negative function in $\operatorname{L}^1(\mathcal{X}\times\mathcal{Y})$. Hence, 
\begin{subequations}
      \label{primal_sur_L1}
\begin{IEEEeqnarray}{rCl}
    \mathscr{P} \stackrel{\triangle }{=}\inf_{r \geq 0}  &&  \middlesqueezeequ\int_{\mathcal{X}\times\mathcal{Y}} \Big( \mathsf{c}(x,y)\, r(x,y) + \lambda \phi\big(r(x,y)\big) \Big) \, \mathrm{d}P_X P_Y \IEEEeqnarraynumspace \\ \label{primal_sur_L1_contrainte_1}
    \text{s.t.} \quad && \int_{\mathcal{Y}} r(\cdot, y) \,\mathrm{d}P_Y(y) \eqas{P_X} 1 \\ \label{primal_sur_L1_contrainte_2}
    && \int_{\mathcal{X}} r(x, \cdot) \,\mathrm{d}P_X(x) \eqas{P_Y} 1,
\end{IEEEeqnarray}
\end{subequations}
where \eqref{primal_sur_L1_contrainte_1} and \eqref{primal_sur_L1_contrainte_2} enforce the marginal constraints on the Radon-Nikodym derivative of $P_{XY}$ with respect to $P_X P_Y$; and  $r$ is a non-negative function in $\operatorname{L}^1(\mathcal{X}\times\mathcal{Y})$.
\subsection{Dual Problem}
The dual problem is derived in \cite[Section~2.2]{InriaRRnicaise} using standard convex analysis tools, and is formulated over the product space $\operatorname{L}^\infty(\mathcal{X}, P_X)\times\operatorname{L}^\infty(\mathcal{Y}, P_Y)$ as follows
\begin{IEEEeqnarray}{rCl}
  \label{dual_sur_L1}
  \middlesqueezeequ
  \nonumber
  \mathscr{D} \stackrel{\triangle }{=}&&\sup_{f,g} \int_{\mathcal{X}} f(x) \,\mathrm{d}P_X(x) + \int_{\mathcal{Y}} g(y) \,\mathrm{d}P_Y(y) \\  && -  \lambda \int_{\mathcal{X}\times\mathcal{Y}} \phi^*\left(\frac{f(x) + g(y) -\mathsf{c}(x,y)}{\lambda}\right)\,\mathrm{d}P_X P_Y, \ \
\end{IEEEeqnarray}
where the function $\phi^*$ is the convex conjugate of the function $\phi$, as defined in \cite[Section~2.3]{convexanalysis}.
\section{Existence and optimality properties}
\subsection{Attainment of the dual problem}
\label{section_attainment_dual}
The existence of optimal potentials for the dual problem in \eqref{dual_sur_L1} is established by extending the results of \cite{dimarino2020optimaltransportlossessinkhorn} and \cite{terjek} to Polish spaces and bounded cost functions. This result follows from constructing maximizing sequences through alternating coordinate-wise updates \cite[Section~2.4.1]{InriaRRnicaise}.
\begin{theorem}
  \label{theoreme_existence_de_potentiels}
  The supremum in problem (\ref{dual_sur_L1}) is attained for a unique pair of potentials $(f_0,g_0)$ up to the transformation $(f,g)\mapsto (f+a,g-a)$, for $a \in\R$.
\end{theorem}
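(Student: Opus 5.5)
Existence will follow from a maximizing sequence made uniformly bounded by alternating coordinate-wise updates (the Sinkhorn-type scheme announced before the statement), followed by a compactness argument. Uniqueness will follow from strict concavity of the dual objective modulo constants.

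Denote the dual objective in \eqref{dual_sur_L1} by $J(f,g)$. For fixed $g\in\operatorname{L}^\infty(\mathcal Y,P_Y)$, maximizing $J(\cdot,g)$ decouples pointwise in $x$. For $P_X$-almost every $x$, the optimal value $f(x)=t$ solves the first-order condition
\begin{equation*}
\int_{\mathcal Y}{\phi^*}'\Big(\frac{t+g(y)-\mathsf c(x,y)}{\lambda}\Big)\,\mathrm dP_Y(y)=1 .
\end{equation*}
By Lemma~\ref{lemme_proprietes_de_legendre}, ${\phi^*}'$ is continuous and strictly increasing from $(-\infty,\beta_\phi)$ onto $(0,+\infty)$, so the left-hand side is strictly increasing in $t$. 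It tends to $0$ as $t\to-\infty$ and exceeds $1$ before the argument reaches $\beta_\phi$. Hence there is a unique root $t=f_g(x)$, which is measurable in $x$. Because ${\phi^*}'(s)=1$ exactly at $s=\phi'(1)=0$, the root satisfies the bounds
\begin{equation*}
\operatorname*{ess\,inf}\,(\mathsf c-g)\le f_g(x)\le \operatorname*{ess\,sup}\,(\mathsf c-g).
\end{equation*}
So $\operatorname{osc}(f_g)\le \operatorname{osc}(\mathsf c)+\ldots$; more precisely, $\sup f_g-\inf f_g\le 2\|\mathsf c\|_\infty$ uniformly in $g$. The update in $g$ given $f$ is handled symmetrically. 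Starting from any $(f,g)$ and performing one update in each variable, then normalizing with $(f+a,g-a)$ so that, say, $\int f\,\mathrm dP_X=0$, produces a pair in the bounded set
\begin{equation*}
K=\{\|f\|_\infty\le 2\|\mathsf c\|_\infty,\ \|g\|_\infty\le 4\|\mathsf c\|_\infty\}.
\end{equation*}
This step does not decrease $J$. Therefore $\mathscr D=\sup_K J$, and I can choose a maximizing sequence $(f_n,g_n)\subset K$.

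On $K$ the arguments of $\phi^*$ stay in a compact interval. This compact interval lies in $(-\infty,\beta_\phi)$ after the pointwise optimization, and I would need to check that the constructed potentials keep $f+g-\mathsf c<\lambda\beta_\phi$ with margin. I expect that margin to be the main technical obstacle when $\beta_\phi<\infty$. I would try to derive it from the first-order condition: ${\phi^*}'$ must be integrable to $1$, which forbids the argument from approaching $\beta_\phi$ on large sets, though perhaps not pointwise. If that fails, I would fall back on lower semicontinuity. For compactness, bounded sequences in $\operatorname{L}^\infty\subset\operatorname{L}^2$ admit weakly convergent subsequences, and Mazur's lemma gives convex combinations converging strongly and a.e. Since $J$ is concave, these convex combinations are still maximizing. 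Because $\phi^*$ is convex and lower semicontinuous, and its argument is bounded below (so $\phi^*$ is bounded there, being nonincreasing-to-$0$ at $-\infty$), Fatou's lemma gives upper semicontinuity of $J$ along a.e.-convergent bounded sequences. The limit $(f_0,g_0)$ therefore attains $\mathscr D$.

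For uniqueness, suppose $(f_0,g_0)$ and $(f_1,g_1)$ are both optimal. Then their midpoint is also optimal by concavity. Since $\phi^*$ is strictly convex on its domain (it is of Legendre type by Lemma~\ref{lemme_proprietes_de_legendre}), equality in the convexity inequality forces
\begin{equation*}
f_0(x)+g_0(y)=f_1(x)+g_1(y)\quad P_XP_Y\text{-a.e.}
\end{equation*}
The linear terms in $J$ are affine and do not affect this conclusion. Finally, a function of the form $h(x)=k(y)$ holding $P_XP_Y$-a.e. has both sides a.e. equal to a constant $a$, by Fubini's theorem. This yields $(f_1,g_1)=(f_0+a,g_0-a)$.
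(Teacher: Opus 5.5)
Your strategy matches the route the paper indicates: alternating coordinate-wise updates confine a maximizing sequence to a bounded set, then compactness and upper semicontinuity give existence, and strict convexity of $\phi^*$ gives uniqueness. The uniqueness part is fine. The boundedness step, however, rests on a false claim: when $\beta_\phi<\infty$, the first-order equation for the partial update need not have a root. Take the reverse-KL generator $\phi(x)=-\log x+x-1$, for which ${\phi^*}'(s)=1/(1-s)$ on $(-\infty,1)$. Set $\lambda=1$, $P_Y$ uniform on $[0,1]$, $g\equiv 0$, and use the bounded cost $\mathsf c(x,y)=A\sqrt y$ with $A>2$. Finiteness of the objective forces $t\le 1$, and for every such $t$
\[
\int_0^1\frac{\mathrm dy}{1-t+A\sqrt y}\le\int_0^1\frac{\mathrm dy}{A\sqrt y}=\frac{2}{A}<1 .
\]
Consider the partial objective $h_x(t)=t-\lambda\int_{\mathcal Y}\phi^*\big((t+g(y)-\mathsf c(x,y))/\lambda\big)\,\mathrm dP_Y(y)$. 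It is strictly increasing up to $t_{\max}(x)=\lambda\beta_\phi-\operatorname*{ess\,sup}_y\big(g(y)-\mathsf c(x,y)\big)$, is still finite there, and equals $-\infty$ beyond, so its maximizer is this boundary point. You must therefore define the update as the maximizer of the concave function $h_x$ (either a root or $t_{\max}(x)$) and rederive your bounds for it. The bounds do survive, because $t_{\max}(x)>\operatorname*{ess\,inf}_y\big(\mathsf c(x,y)-g(y)\big)$ as $\beta_\phi>0$. They do not follow from your argument via ${\phi^*}'(0)=1$ at a root.

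Second, the uniform estimate $\sup f_g-\inf f_g\le 2\|\mathsf c\|_\infty$ does not follow from your displayed inequalities. Those only give $2\|\mathsf c\|_\infty+\operatorname{osc}(g)$, which cannot produce a set $K$ independent of the starting pair, so $\mathscr D=\sup_K J$ is not yet justified. The missing ingredient is a comparison argument. Write $F_x(t)$ for the left-hand side of your first-order equation. If $\mathsf c(x',\cdot)\le\mathsf c(x,\cdot)+\delta$, then $F_{x'}(t+\delta)\ge F_x(t)$ and $t_{\max}(x')\le t_{\max}(x)+\delta$. Hence the maximizer of $h_{x'}$ exceeds that of $h_x$ by at most $\delta$, which gives $|f_g(x)-f_g(x')|\le\sup_y|\mathsf c(x,y)-\mathsf c(x',y)|\le 2\|\mathsf c\|_\infty$.

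Finally, drop the search for a margin below $\beta_\phi$. It is false in general, since dual maximizers need not keep $f\oplus g-\mathsf c$ uniformly below $\lambda\beta_\phi$; this is exactly why membership in $\mathcal A_{\phi,\mathsf c}$ is an extra hypothesis in Theorem~\ref{theoreme_pb_primal}, and why the primal can fail to be attained. It is also not needed. Your Fatou fallback is the right argument, provided you take $\phi^*$ as the extended-valued lower semicontinuous function ($+\infty$ beyond $\beta_\phi$) and supply the lower bound from $\phi^*(s)\ge s$, which follows from $\phi(1)=0$. Your stated reason for that lower bound is wrong: $\phi^*$ is nondecreasing and need not tend to $0$ at $-\infty$ (for reverse KL, $\phi^*(s)=-\log(1-s)$).
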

\begin{IEEEproof}
  The proof is presented in \cite{InriaRRnicaise}.
\end{IEEEproof}
The uniqueness is stated ``up to a constant" because the dual functional depends on the potentials primarily through their sum $f(x)+g(y)$.
Consequently, adding a scalar $a$ to $f$ and subtracting it from $g$ leaves the objective value unchanged.
\subsection{Characterization of dual optimizers}
The characterization of optimal dual potentials relies on first-order optimality conditions obtained by differentiating the dual objective in \eqref{dual_sur_L1}. This requires differentiating the integral term involving the convex conjugate $\phi^*$.
Since the derivative ${\phi^*}'$ blows up as its argument approaches the boundary $\beta_\phi$ (Lemma~\ref{lemme_proprietes_de_legendre}), differentiation under the integral sign is only justified when the argument of $\phi^*$ remains uniformly bounded away from this singularity. To ensure this regularity, we restrict the analysis to a subset of admissible potentials on which the dual objective is differentiable.
Specifically, define the set
\begin{IEEEeqnarray}{rCl}
  \label{definition_A_phi_c}
  \nonumber
   \mathcal{A}_{\phi,\mathsf{c}} \stackrel{\triangle}{=} \{(f,g) \in \operatorname{L}^\infty(\mathcal{X},P_X)\times \operatorname{L}^\infty(\mathcal{Y},P_Y): \hspace{1cm} \\  \exists \gamma<\beta_\phi, \ \operatorname{Ran}\left(\frac{f\oplus g -\mathsf{c}}{\lambda}\right) \subsetas{P_XP_Y} (-\infty,\gamma)\}.
\end{IEEEeqnarray}
Within this set, the first-order optimality conditions are well-defined and admit a simple characterization, as stated in the following lemma.
\begin{lemme}
  \label{lemme_solutions_ssi_c_transform_differentiable}
  A pair of functions $(\hat{f},\hat{g}) \in \mathcal{A}_{\phi,\mathsf{c}}$, where $\mathcal{A}_{\phi,\mathsf{c}}$ is defined in \eqref{definition_A_phi_c}, are maximizing potentials for the dual problem in (\ref{dual_sur_L1}) if and only if:
  \begin{IEEEeqnarray}{rCl}
    \label{systeme_critere_condition_pour_etre_optimaux}
      \int_\mathcal{Y} {\phi'}^{-1}\left(\frac{\hat{f}(x)+\hat{g}(y) - \mathsf{c}(x,y)}{\lambda}\right)\mathrm{d}P_Y(y) &\eqas{P_X}& 1 ;
 \ \text{and} \ \  \ \ \ \ \\ \label{systeme_critere_condition_pour_etre_optimaux_2}
      \int_\mathcal{X} {\phi'}^{-1}\left(\frac{\hat{f}(x)+\hat{g}(y) - \mathsf{c}(x,y)}{\lambda}\right)\mathrm{d}P_X(x) &\eqas{P_Y}& 1.
  \end{IEEEeqnarray}
\end{lemme}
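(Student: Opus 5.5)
The plan is to treat the dual objective in \eqref{dual_sur_L1} as a concave functional $J(f,g)$ on $\operatorname{L}^\infty(\mathcal{X},P_X)\times\operatorname{L}^\infty(\mathcal{Y},P_Y)$. I will first show that $J$ is G\^ateaux differentiable at every point of $\mathcal{A}_{\phi,\mathsf{c}}$ and compute its derivative. Then I will use the standard fact that, for a concave function, a point where $J$ is G\^ateaux differentiable is a global maximizer if and only if the G\^ateaux derivative vanishes there. Concavity of $J$ is immediate: the first two terms are linear, and $\phi^*$ is convex, so $-\lambda\int\phi^*(\cdot)$ is concave in $(f,g)$.

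\textbf{Differentiability.} Fix $(\hat f,\hat g)\in\mathcal{A}_{\phi,\mathsf{c}}$, with $\gamma<\beta_\phi$ such that $h\stackrel{\triangle}{=}(\hat f\oplus\hat g-\mathsf{c})/\lambda<\gamma$ almost surely. Since $\hat f,\hat g,\mathsf{c}$ are bounded, $h$ also has an essential lower bound $m$. Take directions $(u,v)\in\operatorname{L}^\infty\times\operatorname{L}^\infty$ and $|t|\le t_0$, where $t_0$ is small enough that $t_0(\|u\|_\infty+\|v\|_\infty)/\lambda\le(\beta_\phi-\gamma)/2$. Then $h+t(u\oplus v)/\lambda$ stays almost surely in the compact interval $[m-1,(\gamma+\beta_\phi)/2]\subset(-\infty,\beta_\phi)$.

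By Lemma~\ref{lemme_proprietes_de_legendre}, ${\phi^*}'={\phi'}^{-1}$ is continuous on $(-\infty,\beta_\phi)$, so it is bounded on that interval. The mean value theorem then gives a uniform bound on the difference quotients. Dominated convergence therefore yields
\begin{IEEEeqnarray}{rCl}
\nonumber
\frac{\mathrm{d}}{\mathrm{d}t}J(\hat f+tu,\hat g+tv)\Big|_{t=0} &=& \int u\,\mathrm{d}P_X+\int v\,\mathrm{d}P_Y \\
\nonumber
&& -\int (u\oplus v)\,{\phi'}^{-1}(h)\,\mathrm{d}P_XP_Y.
\end{IEEEeqnarray}
This expression is linear and continuous in $(u,v)$.

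\textbf{Characterization.} If $(\hat f,\hat g)$ is a maximizer, then for every direction the map $t\mapsto J(\hat f+tu,\hat g+tv)$ has an interior maximum at $t=0$, so the derivative above vanishes. Taking $v=0$ and using Fubini gives
\begin{IEEEeqnarray}{rCl}
\nonumber
\int u(x)\Big(1-\int{\phi'}^{-1}(h(x,y))\,\mathrm{d}P_Y(y)\Big)\mathrm{d}P_X(x) &=& 0
\end{IEEEeqnarray}
for all $u\in\operatorname{L}^\infty(\mathcal{X},P_X)$. The bracketed function is bounded, so choosing $u$ equal to its sign forces it to vanish $P_X$-almost surely, which is \eqref{systeme_critere_condition_pour_etre_optimaux}. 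The choice $u=0$ gives \eqref{systeme_critere_condition_pour_etre_optimaux_2} in the same way.

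Conversely, assume both conditions hold. Then the derivative vanishes in every direction. For an arbitrary $(f,g)\in\operatorname{L}^\infty\times\operatorname{L}^\infty$, set $\varphi(t)=J(\hat f+t(f-\hat f),\hat g+t(g-\hat g))$, which is concave on $[0,1]$ (possibly taking the value $-\infty$). Concavity gives $\varphi(1)\le\varphi(0)+\varphi'(0^+)$ whenever the right derivative exists. Here $\varphi'(0^+)$ exists and equals $0$ by the differentiability step, so $J(f,g)\le J(\hat f,\hat g)$ and $(\hat f,\hat g)$ is a maximizer.

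The main obstacle is the differentiability step, specifically justifying differentiation under the integral. This is exactly where membership in $\mathcal{A}_{\phi,\mathsf{c}}$ is used: it keeps the argument of $\phi^*$ uniformly away from $\beta_\phi$, where ${\phi^*}'$ blows up. The other ingredient is the boundedness of $\mathsf{c}$, $\hat f$ and $\hat g$, which supplies the lower bound. Outside $\mathcal{A}_{\phi,\mathsf{c}}$ the integrand may equal $+\infty$ on a set of positive measure, so this argument would break down.
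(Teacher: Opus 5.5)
Your proof is correct and follows the route the paper indicates: G\^ateaux differentiation of the concave dual objective on $\mathcal{A}_{\phi,\mathsf{c}}$, justified by dominated convergence because the argument of $\phi^*$ stays uniformly away from $\beta_\phi$, then vanishing of the derivative as the optimality condition, with concavity giving the converse. The only slip is the choice of $t_0$: take $t_0(\|u\|_\infty+\|v\|_\infty)/\lambda\le\min\{1,(\beta_\phi-\gamma)/2\}$, since otherwise the lower bound $m-1$ is not guaranteed, and when $\beta_\phi=+\infty$ the interval $[m-1,(\gamma+\beta_\phi)/2]$ is not compact.
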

\begin{IEEEproof}
  The proof is presented in \cite{InriaRRnicaise}.
\end{IEEEproof}
\subsection{Attainment of the primal problem}
As shown in \cite[Theorem~3]{terjek}, an optimal solution to \eqref{problem_principal} may not exist unless Definition~\ref{definition_phi_divergence} is extended to assign finite $\phi$-divergence to measures admitting a non-zero singular part with respect to $P_X P_Y$. Under this extension, the optimal coupling itself may contain a singular component, which poses significant practical difficulties: this component has no closed-form representation and precludes the use of the Sinkhorn algorithm. Consequently, additional assumptions are required to guarantee the existence of a computable solution, as shown hereunder.

\begin{theorem}
  \label{theoreme_pb_primal}
  Suppose that there exists a pair of optimal potentials $(\hat{f},\hat{g})$ in $\mathcal{A}_{\phi,\mathsf{c}}$ for the dual problem in (\ref{dual_sur_L1}).
Then, strong duality holds between the optimization problems (\ref{primal_sur_L1}) and (\ref{dual_sur_L1}), and the unique minimizer of the primal problem satisfies
  \begin{IEEEeqnarray}{rCl}
    \frac{\mathrm{d}P_{XY}^{(\mathsf{c},\phi)}}{\mathrm{d}P_XP_Y}(x,y) \eqas{P_XP_Y} {\phi^*}'\left(\frac{\hat{f}(x) + \hat{g}(y)-\mathsf{c}(x,y)}{\lambda}\right).
\end{IEEEeqnarray}
\begin{IEEEproof}
  The proof is presented in \cite{InriaRRnicaise}.
\end{IEEEproof}
\end{theorem}
\subsection{Sufficient conditions for primal attainment}
\label{section_suficient_conditions}
The validity of Theorem \ref{theoreme_pb_primal} relies on the assumption that there exists a pair of optimal potentials in $\mathcal{A}_{\phi,\mathsf{c}}$.
Intuitively, this condition requires that the optimal potentials stay uniformly bounded away from the singularity of the dual function ${\phi^*}'$.
While verifying this property directly for general Polish spaces can be challenging, it holds automatically in several significant scenarios commonly encountered in applications.
This section explores sufficient conditions ensuring this requirement is met.

The first sufficient condition concerns divergences with superlinear growth, such as the Kullback-Leibler divergence.
In this setting, the domain of the conjugate function $\phi^*$ extends to the entire real line, effectively removing the boundary singularity.
\begin{lemme}
  \label{lemme_superlineaire}
  If $\beta_\phi$ in \eqref{definition_de_beta_phi} is equal to $+\infty$, any pair of optimal potentials is in $\mathcal{A}_{\phi,\mathsf{c}}$.
\end{lemme}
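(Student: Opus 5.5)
The plan is to verify membership in $\mathcal{A}_{\phi,\mathsf{c}}$ directly from its definition in \eqref{definition_A_phi_c}. When $\beta_\phi=+\infty$, the upper barrier $\gamma<\beta_\phi$ is just some finite real number. So it suffices to show that for any optimal pair $(\hat f,\hat g)$, the function $(\hat f\oplus\hat g-\mathsf{c})/\lambda$ is essentially bounded above.

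The key observation is that the dual problem \eqref{dual_sur_L1} is posed over $\operatorname{L}^\infty(\mathcal{X},P_X)\times\operatorname{L}^\infty(\mathcal{Y},P_Y)$. Any optimal pair, which exists by Theorem~\ref{theoreme_existence_de_potentiels}, therefore satisfies $|\hat f|\le \|\hat f\|_\infty$ $P_X$-a.s. and $|\hat g|\le\|\hat g\|_\infty$ $P_Y$-a.s. The cost $\mathsf{c}$ is bounded by assumption, say $|\mathsf{c}|\le \|\mathsf{c}\|_\infty$.

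The steps are as follows.
\begin{enumerate}
\item Note that the set of $(x,y)$ where $|\hat f(x)|>\|\hat f\|_\infty$ or $|\hat g(y)|>\|\hat g\|_\infty$ is a union of a set $N_X\times\mathcal{Y}$ and a set $\mathcal{X}\times N_Y$, with $P_X(N_X)=P_Y(N_Y)=0$. This union is $P_XP_Y$-null.
\item Deduce that, $P_XP_Y$-a.s.,
\begin{equation*}
\frac{\hat f(x)+\hat g(y)-\mathsf{c}(x,y)}{\lambda}\le M\stackrel{\triangle}{=}\frac{\|\hat f\|_\infty+\|\hat g\|_\infty+\|\mathsf{c}\|_\infty}{\lambda}.
\end{equation*}
\item Take $\gamma\stackrel{\triangle}{=}M+1<+\infty=\beta_\phi$. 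The range condition $\operatorname{Ran}\big((\hat f\oplus\hat g-\mathsf{c})/\lambda\big)\subsetas{P_XP_Y}(-\infty,\gamma)$ then holds, so $(\hat f,\hat g)\in\mathcal{A}_{\phi,\mathsf{c}}$.
\end{enumerate}

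There is essentially no obstacle; the only point requiring care is the null-set bookkeeping in the first step. The a.s. bounds on $\hat f$ and $\hat g$ are with respect to the marginals, and they must be transferred to the product measure; this is immediate from the product structure. One might also remark, for context, that by \eqref{domaine_de_phi_etoile} we have $\operatorname{int}(\operatorname{dom}(\phi^*))=(-\infty,+\infty)$, so $\phi^*$ is finite and smooth on all of $\R$. This is why no singularity can be approached and why finiteness of the bound $\gamma$ is all that is needed.
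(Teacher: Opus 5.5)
Your proof is correct and takes essentially the same route as the paper, which states that when $\beta_\phi=+\infty$ the domain of $\phi^*$ is all of $\R$, so there is no boundary singularity to stay away from. Because the dual is posed over $\operatorname{L}^\infty(\mathcal{X},P_X)\times\operatorname{L}^\infty(\mathcal{Y},P_Y)$ and $\mathsf{c}$ is bounded, $(\hat f\oplus\hat g-\mathsf{c})/\lambda$ is $P_XP_Y$-a.s.\ bounded above by a finite constant, so any finite $\gamma$ above that bound works; your transfer of the marginal null sets via $N_X\times\mathcal{Y}$ and $\mathcal{X}\times N_Y$ is exactly the bookkeeping needed.
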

\begin{IEEEproof}
  The proof is presented in \cite{InriaRRnicaise}.
\end{IEEEproof}
The second sufficient condition addresses the case of discrete OT, where the underlying spaces are finite sets.
Here, the finiteness of the space prevents the values of the potentials from accumulating asymptotically against the boundary $\beta_\phi$.
\begin{lemme}
  \label{condition_suffisante_sets_finis}
  If the sets $\mathcal{X}$ and $\mathcal{Y}$ are both finite, then any pair of optimal potentials is in $\mathcal{A}_{\phi,\mathsf{c}}$.
\end{lemme}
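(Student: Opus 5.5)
In the finite case, every function on $\mathcal{X}$ and $\mathcal{Y}$ takes finitely many values, so membership in $\mathcal{A}_{\phi,\mathsf{c}}$ reduces to showing that the argument of $\phi^*$ stays strictly below $\beta_\phi$ at every atom. The plan is therefore as follows. Take an optimal pair $(f,g)$ and reduce to the supports of $P_X$ and $P_Y$, since points of measure zero are irrelevant for the almost-sure statements. Next, show that at each pair $(x,y)$ with $P_X(x)P_Y(y)>0$ we have
\[
s(x,y) \stackrel{\triangle}{=} \frac{f(x)+g(y)-\mathsf{c}(x,y)}{\lambda} < \beta_\phi .
\]
Finally, set $\gamma$ to be any number strictly between $\max_{x,y} s(x,y)$ and $\beta_\phi$. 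Such a $\gamma$ exists because there are finitely many pairs, and this yields the inclusion required in \eqref{definition_A_phi_c}. If $\beta_\phi=+\infty$, Lemma~\ref{lemme_superlineaire} already gives the claim, so assume $\beta_\phi<+\infty$.

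The key step is to exclude $s(x,y)\ge\beta_\phi$ on an atom of positive mass. First, finiteness of the dual value forces $s(x,y)\in\operatorname{dom}(\phi^*)$ on such atoms. Indeed, the objective in \eqref{dual_sur_L1} is a finite sum with positive weights $P_X(x)P_Y(y)$, so any value of $\phi^*$ equal to $+\infty$ would make it $-\infty$. The pair $(0,0)$ gives a finite value, so optimal potentials cannot do this. Hence $s\le\beta_\phi$, and it remains to exclude equality $s(x_0,y_0)=\beta_\phi$ at some atom.

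To exclude it, I will use a one-sided perturbation. Replace $f$ by $f-\varepsilon\mathbf{1}_{\{x_0\}}$. The linear term decreases by exactly $\varepsilon P_X(x_0)$. In the penalty term, every argument $s(x_0,y)$ moves left by $\varepsilon/\lambda$, staying in $\operatorname{dom}(\phi^*)$, which is an interval unbounded below. Because $\phi^*$ is convex and ${\phi^*}'(t)\to+\infty$ as $t\to\beta_\phi$ (Lemma~\ref{lemme_proprietes_de_legendre} together with $b_\phi=+\infty$), the left difference quotient of $\phi^*$ at $\beta_\phi$ is $+\infty$. Explicitly,
\[
\bigl(\phi^*(\beta_\phi)-\phi^*(\beta_\phi-\varepsilon/\lambda)\bigr)\cdot\frac{\lambda}{\varepsilon} \longrightarrow +\infty .
\]
So the penalty decrease coming from the atom $(x_0,y_0)$ alone is $\lambda P_X(x_0)P_Y(y_0)\,\bigl(\phi^*(\beta_\phi)-\phi^*(\beta_\phi-\varepsilon/\lambda)\bigr)$, which is much larger than $\varepsilon$. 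The remaining atoms $(x_0,y)$ also contribute nonnegative decreases, because $\phi^*$ is nondecreasing: its derivative ${\phi^*}'$ is positive. Hence for small $\varepsilon$ the objective strictly increases, contradicting optimality.

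I expect this last step to be the only delicate point. It requires checking that $\phi^*$ is finite at $\beta_\phi$ in the case under consideration, that $\phi^*$ is monotone, and that the infinite one-sided slope at the boundary follows from essential smoothness of $\phi^*$ as a Legendre function. Everything else is bookkeeping with finitely many atoms.
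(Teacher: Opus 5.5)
Your proof is correct in substance and follows the idea the paper indicates; the paper itself only gives the intuition that finitely many values cannot accumulate at $\beta_\phi$ and defers the proof to the report. It suffices to show that each value of $(f\oplus g-\mathsf c)/\lambda$ on an atom of positive mass lies strictly below $\beta_\phi$. Your one-sided perturbation $f-\varepsilon\mathbf{1}_{\{x_0\}}$, combined with the convexity bound $\phi^*(\beta_\phi)-\phi^*(\beta_\phi-h)\ge h\,{\phi^*}'(\beta_\phi-h)\to+\infty$, correctly handles the only nontrivial case, $\beta_\phi\in\operatorname{dom}(\phi^*)$. That case really does occur, so the step is not vacuous. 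For example, Neyman's $\chi^2$ generator $\phi(x)=x+1/x-2$ has $\beta_\phi=1$ and $\phi^*(t)=2-2\sqrt{1-t}$ for $t\le 1$.

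One justification needs repair. The pair $(0,0)$ need not give a finite dual value, because $\mathsf c$ is only assumed bounded, not nonnegative. If $\beta_\phi<+\infty$ and $-\mathsf c(x,y)/\lambda>\beta_\phi$ on an atom of positive mass, then $\phi^*=+\infty$ there and the objective at $(0,0)$ equals $-\infty$. This happens, for instance, for reverse KL with $\mathsf c\equiv-2\lambda$. Use instead the constant potentials $f\equiv\min\mathsf c$ and $g\equiv 0$. Then $(f\oplus g-\mathsf c)/\lambda\le 0<\beta_\phi$, where $\beta_\phi>0$ because $0=\phi'(1)\in(-\infty,\beta_\phi)$. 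Hence the finitely many values of $\phi^*$ involved are finite and the supremum is $>-\infty$, which is all your first step needs.
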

\begin{IEEEproof}
  The proof is presented in \cite{InriaRRnicaise}.
\end{IEEEproof}
\section{Equivalence of the problem to regularization on the family of $\phi$-divergences}
\subsection{Main results}
The following theorem constitutes the main result of this work. Intuitively, it states that the choice of divergence in regularized OT is not intrinsic to the problem: any effect of the regularization on the optimal coupling can equivalently be attributed to a modification of the transport cost. The result and its proof have been adapted from \cite[Theorem~2]{daunas_equival}, which develops in the context of empirical risk minimization with $\phi$-divergence regularization. Although there are fundamental technical differences due to the presence of additional constraints in the case of optimal transport, the two results convey the same meaning in their corresponding scenarios.
\begin{theorem}
\label{main_result}
Assume that the problem in \eqref{dual_sur_L1} admits a pair of optimal potentials
\((\hat f,\hat g)\in\mathcal{A}_{\phi,\mathsf c}\),
and let \(\psi\) be a Legendre-type generator (Definition~\ref{def_de_P1}).
Then, there exists a bounded function
\(\mathsf v:\mathcal X\times\mathcal Y\to\mathbb R\)
such that the problem in \eqref{problem_principal} is equivalent,
in the sense that both problems admit the same minimizer,
to the following optimization problem:
\begin{IEEEeqnarray}{rCl}
\label{problem_secondaire}
\nonumber
\inf_{P_{XY}\in\Pi(P_X,P_Y)}
\int_{\mathcal X\times\mathcal Y}
\mathsf v(x,y)\,\mathrm dP_{XY}(x,y) \\ 
+ \lambda D_\psi\!\left(P_{XY}\,\middle\|\,P_XP_Y\right).
\end{IEEEeqnarray}
\end{theorem}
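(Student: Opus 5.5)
By Theorem~\ref{theoreme_pb_primal}, the problem in \eqref{problem_principal} has a unique minimizer $P^{(\mathsf c,\phi)}_{XY}$ with density $r^\star \triangleq {\phi^*}'\big((\hat f\oplus\hat g-\mathsf c)/\lambda\big)$ with respect to $P_XP_Y$. The plan is to choose the cost $\mathsf v$ so that the $\psi$-problem has the same dual first-order conditions, with the \emph{same} potentials $(\hat f,\hat g)$. Then Lemma~\ref{lemme_solutions_ssi_c_transform_differentiable} and Theorem~\ref{theoreme_pb_primal}, applied to $\psi$, give the same optimal density. Concretely, I will require
\begin{equation*}
{\psi^*}'\Big(\frac{\hat f(x)+\hat g(y)-\mathsf v(x,y)}{\lambda}\Big)= r^\star(x,y),
\end{equation*}
that is, using Lemma~\ref{lemme_proprietes_de_legendre} (${\psi^*}'$ and $\psi'$ are mutually inverse),
\begin{equation*}
\mathsf v(x,y)\triangleq \hat f(x)+\hat g(y)-\lambda\,\psi'\big(r^\star(x,y)\big).
\end{equation*}
This is well defined provided $r^\star>0$, which holds since ${\phi^*}'$ takes values in $(0,+\infty)$.

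\textbf{Step 1 (boundedness of $r^\star$).} Since $(\hat f,\hat g)\in\mathcal A_{\phi,\mathsf c}$ and $\mathsf c$ is bounded, the argument $(\hat f\oplus\hat g-\mathsf c)/\lambda$ lies $P_XP_Y$-a.s. in a compact interval $[m,\gamma]\subset(-\infty,\beta_\phi)$. Here $m>-\infty$ because $\hat f,\hat g\in\operatorname{L}^\infty$ and $\mathsf c$ is bounded. By continuity and strict monotonicity of ${\phi^*}'$, we get $r^\star\in[{\phi^*}'(m),{\phi^*}'(\gamma)]\subset(0,+\infty)$ a.s.

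\textbf{Step 2 (boundedness of $\mathsf v$).} Since $\psi'$ is continuous on $(0,+\infty)$, it is bounded on this compact interval. Hence $\mathsf v$ is bounded a.s. We then modify $\hat f,\hat g$ and $\mathsf v$ on null sets, taking bounded Borel representatives, so that $\mathsf v$ is a genuine bounded Borel cost.

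\textbf{Step 3 (admissibility).} By construction,
\begin{equation*}
\frac{\hat f\oplus\hat g-\mathsf v}{\lambda}=\psi'(r^\star)\in\big[\psi'({\phi^*}'(m)),\,\psi'({\phi^*}'(\gamma))\big],
\end{equation*}
and the upper endpoint is strictly less than $\beta_\psi$, since $\psi'$ maps $(0,\infty)$ into $(-\infty,\beta_\psi)$. Therefore $(\hat f,\hat g)\in\mathcal A_{\psi,\mathsf v}$.

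\textbf{Step 4 (optimality and conclusion).} The optimality conditions \eqref{systeme_critere_condition_pour_etre_optimaux}--\eqref{systeme_critere_condition_pour_etre_optimaux_2} for $\phi$ state that $r^\star$ has unit marginals. Since ${\psi'}^{-1}\big((\hat f\oplus\hat g-\mathsf v)/\lambda\big)=r^\star$, the same identities are exactly the conditions of Lemma~\ref{lemme_solutions_ssi_c_transform_differentiable} for $(\psi,\mathsf v)$. So $(\hat f,\hat g)$ are optimal potentials for the $\psi$-dual. Theorem~\ref{theoreme_pb_primal}, applied to $(\psi,\mathsf v)$, then shows that \eqref{problem_secondaire} has a unique minimizer whose density is ${\psi^*}'(\psi'(r^\star))=r^\star$. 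This coincides with $P^{(\mathsf c,\phi)}_{XY}$.

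I expect the main obstacle to be Step 3 together with the measurability bookkeeping. One must check that the new potentials stay uniformly away from $\beta_\psi$, not merely below it. This is exactly where the uniform bound $\gamma<\beta_\phi$ is needed: it keeps $r^\star$ bounded above, and hence $\psi'(r^\star)$ bounded away from $\beta_\psi$. One must also check that the lemmas, which are stated for a fixed bounded cost, apply to $\mathsf v$ once it is chosen as a Borel representative.
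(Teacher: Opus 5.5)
Your proposal is correct and follows essentially the paper's route. You define $\mathsf v$ so that the new dual argument equals $\psi'(r^\star)$, use the uniform bound $\gamma<\beta_\phi$ to show membership in $\mathcal A_{\psi,\mathsf v}$, and conclude with Lemma~\ref{lemme_solutions_ssi_c_transform_differentiable} and Theorem~\ref{theoreme_pb_primal}. The only difference is cosmetic: the paper uses constant potentials $\check f=\check g=\tfrac12$ with $\mathsf v=1-\lambda\psi'(r^\star)$, while you keep $(\hat f,\hat g)$ with $\mathsf v=\hat f\oplus\hat g-\lambda\psi'(r^\star)$, and these costs differ by a separable term that does not change minimizers over $\Pi(P_X,P_Y)$.
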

\begin{IEEEproof}
  Let $(\hat{f},\hat{g}) \in \mathcal{A}_{\phi,\mathsf{c}}$ be maximizing potentials for the dual problem in (\ref{dual_sur_L1}).
Define the cost function $\mathsf{v}$ as
  \begin{IEEEeqnarray}{rCl}
    \middlesqueezeequ
    \label{definition_de_v}
    \mathsf{v}(x,y) = -\lambda{\psi'}\left({\phi'}^{-1}\left(\frac{\hat{f}(x) + \hat{g}(y) -\mathsf{c}(x,y)}{\lambda}\right)\right) + 1.
  \end{IEEEeqnarray}
  Given that the functions $\mathsf{c}$, $\hat{f}$ and $\hat{g}$ are bounded and that the range of $(x,y)\mapsto \frac{\hat{f}(x) + \hat{g}(y) -\mathsf{c}(x,y)}{\lambda}$ is bounded and contained in some interval of the form $(-\infty,\gamma)$, with $\gamma<\beta_\phi$, the range of $(x,y)\mapsto {\phi'}^{-1}\left(\frac{\hat{f}(x) + \hat{g}(y) -\mathsf{c}(x,y)}{\lambda}\right)$ is bounded and contained in an interval of the form $(\rho,+\infty)$ with $\rho>0$.
Since $\psi'$ is $C^1$ on $(0,\infty)$, the function
  \begin{IEEEeqnarray}{rCl}
    (x,y)\mapsto {\psi'}\left({\phi'}^{-1}\left(\frac{\hat{f}(x) + \hat{g}(y) -\mathsf{c}(x,y)}{\lambda}\right)\right)
  \end{IEEEeqnarray}
   is also bounded.
Thus, the function $\mathsf{v}$ is bounded, ensuring that problem (\ref{problem_secondaire}) fits within the setting established in previous sections.
Let $\check{f} \in \operatorname{L}^\infty(\mathcal{X},P_X)$ and $\check{g}\in \operatorname{L}^\infty(\mathcal{Y},P_Y)$ be defined by
  \begin{IEEEeqnarray}{rCl}
    \check{f}(x) = \frac{1}{2}, \ \forall x\in \mathcal{X};  & \mbox{ and } &
      \check{g}(y) = \frac{1}{2}, \ \forall y\in \mathcal{Y}.
\end{IEEEeqnarray}
  The proof proceeds by demonstrating that the functions $\check{f},\check{g}$ satisfy (\ref{systeme_critere_condition_pour_etre_optimaux}) and \eqref{systeme_critere_condition_pour_etre_optimaux_2} for the problem in \eqref{problem_secondaire}.
Verification proceeds as follows: let $x\in \mathcal{X}$ be fixed, and compute the integral
  \begin{IEEEeqnarray}{rCl}
    \nonumber
    &&\int_\mathcal{Y} {\psi'}^{-1}\left(\frac{\check{f}(x)+\check{g}(y)-\mathsf{v}(x,y)}{\lambda}\right)\mathrm{d}P_Y(y) \\ \middlesqueezeequ  &=& \middlesqueezeequAggressive \int_\mathcal{Y} {\psi'}^{-1}\left(\frac{1+\lambda{\psi'}({\phi'}^{-1}(\frac{\hat{f}(x) + \hat{g}(y) -\mathsf{c}(x,y)}{\lambda})) - 1}{\lambda}\right)\mathrm{d}P_Y(y) \ \ \ \ \  \\ \label{equation_derniere_preuve_1} 
    &=& \int_\mathcal{Y} {\phi'}^{-1}\left(\frac{\hat{f}(x) + \hat{g}(y) -\mathsf{c}(x,y)}{\lambda}\right)\mathrm{d}P_Y(y).
  \end{IEEEeqnarray}
  Since $(\hat{f},\hat{g}) \in \mathcal{A}_{\phi,\mathsf{c}}$ is the solution to (\ref{dual_sur_L1}), Lemma \ref{lemme_solutions_ssi_c_transform_differentiable} implies
  \begin{IEEEeqnarray}{rCl}
    \label{equation_derniere_preuve_2}
        \int_\mathcal{Y} {\phi'}^{-1}\left(\frac{\hat{f}(x) + \hat{g}(y) -\mathsf{c}(x,y)}{\lambda}\right)\mathrm{d}P_Y(y) \eqas{P_X} 1.
  \end{IEEEeqnarray}
  Combining \eqref{equation_derniere_preuve_1} and \eqref{equation_derniere_preuve_2} yields
  \begin{IEEEeqnarray}{rCl}
    \label{systeme_dans_preuve_pour_psi}
      \int_\mathcal{Y} {\psi'}^{-1}\left(\frac{\check{f}(x)+\check{g}(y) - \mathsf{v}(x,y)}{\lambda}\right)\mathrm{d}P_Y(y) &\eqas{P_X}& 1.
  \end{IEEEeqnarray}
  Moreover, applying the same reasoning for a fixed $y\in \mathcal{Y}$ yields
  \begin{IEEEeqnarray}{rCl}
  \label{systeme_dans_preuve_pour_psi_2}
      \int_{ \mathcal X} {\psi'}^{-1}\left(\frac{\check{f}(x)+\check{g}(y) - \mathsf{v}(x,y)}{\lambda}\right)\mathrm{d}P_X(x) &\eqas{P_Y}& 1.
  \end{IEEEeqnarray}
 From Lemma \ref{lemme_solutions_ssi_c_transform_differentiable} $(\hat{f},\hat{g})$ are optimal potentials for the dual of \eqref{problem_secondaire}, which in turn requires verifying that $(\check{f},\check{g}) \in \mathcal{A}_{\psi,\mathsf{v}}$.
This set is defined analogously to $\mathcal{A}_{\phi,\mathsf{c}}$ in \eqref{definition_A_phi_c}, by replacing the parameters $\phi$ and $\mathsf{c}$ with $\psi$ and $\mathsf{v}$ respectively.
This condition requires that the range of $l:(x,y)\mapsto \frac{\check{f}(x)+\check{g}(y) - \mathsf{v}(x,y)}{\lambda}$ satisfies $\operatorname{Ran}(l)\subsetas{P_XP_Y} (-\infty,\gamma)$ for some $\gamma<\beta_\psi$.
Let $(x,y) \in \mathcal{X}\times \mathcal{Y}$ be fixed, and consider
  \begin{IEEEeqnarray}{rCl}
    \label{equation_derniere_preuve_3}
    \middlesqueezeequBrutal
    \frac{\check{f}(x)+\check{g}(y) - \mathsf{v}(x,y)}{\lambda} \ = \psi'\left( {\phi'}^{-1}\left(\frac{\hat{f}(x) + \hat{g}(y) -\mathsf{c}(x,y)}{\lambda}\right)\right). \ \ \
\end{IEEEeqnarray}
  Since the pair $(\hat{f},\hat{g})$ is in the set $\mathcal{A}_{\phi,\mathsf{c}}$ defined in \eqref{definition_A_phi_c}, and due to continuity and strict monotonicity of ${\phi'}^{-1}$ on $(-\infty,\beta_\phi)$ (see Section \ref{section_sur_phi_divergences}), the range of the function $(x,y)\mapsto {\phi'}^{-1}\left(\frac{\hat{f}(x) + \hat{g}(y) -\mathsf{c}(x,y)}{\lambda}\right)$ is bounded and satisfies 
  \begin{IEEEeqnarray}{rCl}
    \middlesqueezeequ
    \operatorname{Ran}\left( {\phi'}^{-1}\left(\frac{\hat{f}(x) + \hat{g}(y) -\mathsf{c}(x,y)}{\lambda}\right)\right) \subsetas{P_XP_Y} (\rho,+\infty),
  \end{IEEEeqnarray}
   for some $\rho>0.$ Given that $\psi'$ is continuous and strictly increasing on $(0,+\infty)$, the range of $(x,y)\mapsto \psi'\left( {\phi'}^{-1}\left(\frac{\hat{f}(x) + \hat{g}(y) -\mathsf{c}(x,y)}{\lambda}\right)\right)$ satisfies 
  \begin{IEEEeqnarray}{rCl}
    \middlesqueezeequAggressive
    \operatorname{Ran}\left( \psi'\left( {\phi'}^{-1}\left(\frac{\hat{f}(x) + \hat{g}(y) -\mathsf{c}(x,y)}{\lambda}\right)\right)\right) \subsetas{P_XP_Y}(-\infty,\gamma), \ \ \ \ \
  \end{IEEEeqnarray}
   for some $\gamma<\beta_\psi$.
Therefore, by \eqref{equation_derniere_preuve_3}, $(\check{f},\check{g})$ belongs to $\mathcal{A}_{\psi,\mathsf{v}}$ and satisfies (\ref{systeme_dans_preuve_pour_psi}) and (\ref{systeme_dans_preuve_pour_psi_2}).
Lemma \ref{lemme_solutions_ssi_c_transform_differentiable} implies that $(\check{f},\check{g})$ are maximizing potentials for the dual problem of (\ref{problem_secondaire}).
\newline
  \newline
  Finally, Theorem \ref{theoreme_pb_primal} gives the form of the solution of the problem in (\ref{problem_secondaire}), denoted by $P_{XY}^{(\mathsf{v},\psi)}$ as
  \begin{IEEEeqnarray}{rCl}
& &  \frac{\mathrm{d}P_{XY}^{(\mathsf{v},\psi)}}{\mathrm{d}P_XP_Y}(x,y)  = {\psi'}^{-1}\left(\frac{\check{f}(x)+\check{g}(y)-v(x,y)}{\lambda}\right) \\ & =& \middlesqueezeequBrutal   {\psi'}^{-1}\left(\psi'\left( {\phi'}^{-1}\left(\frac{\hat{f}(x) + \hat{g}(y) -\mathsf{c}(x,y)}{\lambda}\right)\right) \right) \\ & =& {\phi'}^{-1}\left(\frac{\hat{f}(x) + \hat{g}(y) -\mathsf{c}(x,y)}{\lambda}\right)  =\frac{\mathrm{d}P_{XY}^{(\mathsf{c},\phi)}}{\mathrm{d}P_XP_Y}(x,y), 
  \end{IEEEeqnarray}
where $P_{XY}^{(\mathsf{c},\phi)}$ denotes the solution to the problem in \eqref{primal_sur_L1}. Thus, the two problems share the same minimizer, which completes the proof.
\end{IEEEproof}

\subsection{Examples}

This section illustrates the structural equivalence established in Theorem~\ref{main_result} by considering fixed Polish spaces $\mathcal{X}$ and $\mathcal{Y}$, a fixed bounded cost function $\mathsf{c}$, and a fixed regularization parameter $\lambda>0$. Let $\phi$ denote the generator of the Kullback--Leibler (KL) divergence, defined by $\phi(x)=x\log(x)-x+1$. For this KL generator, the inverse of its derivative is given by
\begin{IEEEeqnarray}{rCl}
\label{derivee_de_KL}
{\phi'}^{-1}(y) = \exp(y).
\end{IEEEeqnarray}
Moreover, the quantity $\beta_\phi$ in~\eqref{definition_de_beta_phi} is equal to $+\infty$. Therefore, it follows from Lemma~\ref{lemme_superlineaire} that the corresponding optimal potentials for the dual problem in~\eqref{dual_sur_L1}, denoted by $\hat{f}$ and $\hat{g}$, belong to the set $\mathcal{A}_{\phi,\mathsf{c}}$  in~\eqref{definition_A_phi_c}.
As a consequence, all the assumptions of Theorem~\ref{main_result} are satisfied for any target Legendre-type generator $\psi$ satisfying Definition~\ref{def_de_P1}. In particular, for the transformed cost function defined in \eqref{definition_de_v}, the problems in \eqref{problem_principal} and \eqref{problem_secondaire} admit the same unique minimizer. The following examples illustrate the resulting transformed cost function~$\mathsf{v}$ obtained via~\eqref{definition_de_v} for various choices of target regularization.
\subsubsection{From KL to reverse KL regularization}
Consider the target regularization defined by the reverse KL generator $\psi(x) = -\log(x) + x - 1$, whose derivative is $\psi'(x) = 1 - \frac{1}{x}$. From~\eqref{definition_de_v} and \eqref{derivee_de_KL}, the transformed cost function $\mathsf{v}$ ensuring that the initial problem~\eqref{problem_principal} and the transformed problem~\eqref{problem_secondaire} share the same unique minimizer is given by
\begin{IEEEeqnarray}{rCl}
  \middlesqueezeequAggressive \mathsf{v}(x,y) & = & -\lambda \psi'\left( \exp\left( \frac{\hat{f}(x) + \hat{g}(y) - \mathsf{c}(x,y)}{\lambda} \right) \right) + 1  \middlesqueezeequAggressive \\ 
& = & -\lambda \left( 1 - \exp\left( - \ \frac{\hat{f}(x) + \hat{g}(y) - \mathsf{c}(x,y)}{\lambda} \right) \right) + 1 \middlesqueezeequBrutal \\
& = & \lambda \exp\left( \frac{\mathsf{c}(x,y) - \hat{f}(x) - \hat{g}(y)}{\lambda} \right) - \lambda + 1.
\end{IEEEeqnarray}

\subsubsection{From KL to Jensen-Shannon regularization}
In the case of a transition to Jensen--Shannon regularization, the target generator is $\psi(x) = x\log(x) - (x+1)\log\left(\frac{x+1}{2}\right)$, with derivative $\psi'(x) = \log\left(\frac{2x}{x+1}\right)$. The transformed cost function $\mathsf{v}$ preserving the minimizer in that case is expressed as
\begin{IEEEeqnarray}{rCl}
\mathsf{v}(x,y) & = & -\lambda \log\left( \frac{2\exp\left( \frac{\hat{f}(x) + \hat{g}(y) - \mathsf{c}(x,y)}{\lambda} \right)}{\exp\left( \frac{\hat{f}(x) + \hat{g}(y) - \mathsf{c}(x,y)}{\lambda} \right) + 1} \right) + 1  \\
& = & \lambda \log\left( \frac{\exp\left( \frac{\hat{f}(x) + \hat{g}(y) - \mathsf{c}(x,y)}{\lambda} \right) + 1}{2\exp\left( \frac{\hat{f}(x) + \hat{g}(y) - \mathsf{c}(x,y)}{\lambda} \right)} \right) + 1.
\end{IEEEeqnarray}
By simplifying the argument within the logarithm, the relation becomes
\begin{IEEEeqnarray}{rCl}
\mathsf{v}(x,y) & = & \lambda \log\left( \frac{1 + \exp\left( \frac{\mathsf{c}(x,y) - \hat{f}(x) - \hat{g}(y)}{\lambda} \right)}{2} \right) + 1.
\end{IEEEeqnarray}
These examples confirm that changing the regularizing divergence manifests as a non-linear adjustment of the cost function, ensuring that the unique minimizer of the regularized optimal transport problem remains unchanged.
\section{Conclusions}
This work investigates $\phi$-divergence regularized optimal transport in Polish spaces with bounded cost functions. The existence of optimal dual potentials and unique primal couplings is established, providing closed-form expressions for the latter. The central contribution, Theorem \ref{main_result}, demonstrates a structural equivalence: a change in the regularization divergence can be compensated by a non-linear transformation of the cost function while preserving the same unique minimizer. This result remains primarily theoretical, since the transformed cost $\mathsf{v}$ in~\eqref{definition_de_v} explicitly depends on the dual potentials $(\hat{f},\hat{g})$ of the original problem. Thus, it does not reduce the computational complexity of solving regularized OT problems. Nonetheless, it provides a unifying framework for regularized OT and offers new perspectives for the characterization and sampling of optimal couplings.

\IEEEtriggeratref{13}
\bibliographystyle{IEEEtran}
\bibliography{biblio2}

@incollection{werner2012fdivergenceconvexbodies,
author={Werner, Elisabeth M.},
title={f-Divergence for Convex Bodies},
bookTitle={Asymptotic Geometric Analysis: {P}roceedings of the Fall 2010 Fields Institute Thematic Program},
year={2013},
publisher={Springer},
address={New York, NY},
pages="381--395"}

@article{Csiszar1967,
  author={Csisz{\'a}r, Imre},
  journal={Studia Scientiarum Mathematicarum Hungarica},
  title={Information-type measures of difference of probability distributions and indirect observations},
  year={1967},
  volume={2},
  pages={299--318}
}

@article{AgrawalHorel2020,
  author={Agrawal, Rajat and Horel, Thibaut},
  journal={Journal of Machine Learning Research},
  title={Optimal Bounds Between f-Divergences and Integral Probability Metrics},
  year={2021},
  volume={22},
  number={1},
  pages={5662--5720},
  month={Jan.}
}

@article{legendretype,
  author={Borwein, Jonathan M. and Lewis, Adrian S.},
  journal={SIAM Journal on Optimization},
  title={Partially-Finite Programming in $L_1$ and the Existence of Maximum Entropy Estimates},
  year={1993},
  volume={3},
  number={2},
  pages={248--267},
  month={May}
}

@inproceedings{wassersteingan,
  author={Arjovsky, Martin and Chintala, Soumith and Bottou, L{\'e}on},
  booktitle={Proceedings of the International Conference on Machine Learning (ICML)},
  title={Wasserstein {GAN}},
  year={2017},
  pages={214--223},
  month={Jul.}
}

@article{domainadaptation,
  author={Courty, Nicolas and Flamary, R{\'e}mi and Tuia, Devis and Rakotomamonjy, Alain},
  journal={IEEE Transactions on Pattern Analysis and Machine Intelligence},
  title={Optimal Transport for Domain Adaptation},
  year={2017},
  volume={39},
  number={9},
  pages={1852--1865},
  month={Sep.}
}

@inproceedings{wassersteinautoencoders,
  author={Tolstikhin, Ilya and Bousquet, Olivier and Gelly, Sylvain and Sch{\"o}lkopf, Bernhard},
  booktitle={Proceedings of the International Conference on Learning Representations (ICLR)},
  title={Wasserstein Auto-Encoders},
  year={2018},
  month={May}
}

@inproceedings{learningwassersteinloss,
author = {Frogner, Charlie and Zhang, Chiyuan and Mobahi, Hossein and Araya-Polo, Mauricio and Poggio, Tomaso},
title = {Learning with a {W}asserstein loss},
year = {2015},
month = {Dec.},
booktitle = {Proceedings of the 29th International Conference on Neural Information Processing Systems - Volume 2},
pages = {2053--2061},
numpages = {9}
}

@article{computergraphics,
  author={Solomon, Justin and de Goes, Fernando and Peyr{\'e}, Gabriel and Cuturi, Marco and Butscher, Adrian and Nguyen, Andy and Du, Tao and Guibas, Leonidas},
  journal={ACM Transactions on Graphics (TOG)},
  title={Convolutional {W}asserstein distances: {E}fficient optimal transportation on geometric domains},
  year={2015},
  volume={34},
  number={4},
  pages={1--11},
  month={Jul.}
}

@inproceedings{blondel2018smoothsparseoptimaltransport,
  author={Blondel, Mathieu and Seguy, Vivien and Rolet, Antoine},
  booktitle={Proceedings of the International Conference on Artificial Intelligence and Statistics (AISTATS)},
  title={Smooth and Sparse Optimal Transport},
  year={2018},
  pages={314--323},
  month={Mar.}
}

@techreport{daunas:hal-04110899,
  author={Daunas, Francisco and Esnaola, I{\~n}aki and Perlaza, Samir M. and Poor, H. Vincent},
  title={Empirical Risk Minimization with Relative Entropy Regularization {T}ype-{II}},
  year={2023},
  number={RR-9508},
  institution={INRIA, Centre Inria d'Universit{\'e} C{\^o}te d'Azur},
  address={Sophia Antipolis, France},
  month={May}
}

@book{pacbayes,
  author={Catoni, Olivier},
  title={{PAC-B}ayesian Supervised Classification: The Thermodynamics of Statistical Learning},
  year={2007},
  publisher={Institute of Mathematical Statistics},
  address={Beachwood, OH, USA},
  volume={56},
  edition={first}
}

@techreport{daunas:hal-04258765,
  author={Daunas, Francisco and Esnaola, I{\~n}aki and Perlaza, Samir M. and Poor, H. Vincent},
  title={Empirical Risk Minimization with f-Divergence Regularization in Statistical Learning},
  year={2023},
  number={RR-9521},
  institution={INRIA, Centre Inria d'Universit{\'e} C{\^o}te d'Azur},
  address={Sophia Antipolis, France},
  month={Oct.}
}

@article{Perlaza_2024,
  author={Perlaza, Samir M. and Bisson, Gaetan and Esnaola, I{\~n}aki and Jean-Marie, Alain and Rini, Stefano},
  journal={IEEE Transactions on Information Theory},
  title={Empirical Risk Minimization With Relative Entropy Regularization},
  year={2024},
  volume={70},
  number={7},
  pages={5122--5161},
  month={Jul.}
}

@inproceedings{daunas_equival,
  author={Daunas, Francisco and Esnaola, I{\~n}aki and Perlaza, Samir M. and Poor, H. Vincent},
  booktitle={Proceedings of the IEEE International Symposium on Information Theory (ISIT)},
  title={Equivalence of Empirical Risk Minimization to Regularization on the Family of $f-$Divergences},
  year={2024},
  pages={759--764},
  month={Jul.}
}

@book{computationaloptitranbsoort,
  author={Peyr{\'e}, Gabriel and Cuturi, Marco},
  title={Computational Optimal Transport},
  year={2020},
  publisher={Foundations and Trends in Machine Learning},
  address={Hanover, MA, USA},
  edition={first}
}

@article{IEEEoptimalmachine,
  author={Pereira, Luiz Manella and Amini, M. Hadi},
  journal={IEEE Access},
  title={A Survey on Optimal Transport for Machine Learning: Theory and Applications},
  year={2025},
  volume={13},
  pages={26506--26526},
  month={Jan.}
}

@article{daunas_asymmetry,
  author={Daunas, Francisco and Esnaola, I{\~n}aki and Perlaza, Samir M. and Poor, H. Vincent},
  journal={IEEE Transactions on Information Theory},
  title={Asymmetry of the Relative Entropy in the Regularization of Empirical Risk Minimization},
  year={2025},
  volume={71},
  number={8},
  pages={6198--6226},
  month={Aug.}
}

@inproceedings{cuturi2013sinkhorndistanceslightspeedcomputation,
  author={Cuturi, Marco},
  booktitle={Proceedings of the International Conference on Neural Information Processing Systems (NeurIPS)},
  title={Sinkhorn Distances: Lightspeed Computation of Optimal Transportation Distances},
  year={2013},
  volume={2},
  pages={2292--2300},
  month={Dec.}
}

@book{convexanalysis,
  author={Zalinescu, Constantin},
  title={Convex analysis in general vector spaces},
  year={2002},
  publisher={World Scientific},
  address={Singapore},
  edition={first}
}

@book{villani,
  author={Villani, C{\'e}dric},
  title={Optimal Transport: Old and New},
  year={2009},
  publisher={Springer-Verlag},
  address={Berlin, Heidelberg},
  edition={first}
}

@inproceedings{terjek,
  author={Terj{\'e}k, D{\'a}vid and Gonz{\'a}lez-S{\'a}nchez, Diego},
  booktitle={Proceedings of the International Conference on Artificial Intelligence and Statistics (AISTATS)},
  title={Optimal transport with f-divergence regularization and generalized Sinkhorn algorithm},
  year={2022},
  volume={151},
  pages={5135--5165},
  month={Mar.}
}

@article{dimarino2020optimaltransportlossessinkhorn,
  author={Di Marino, Simone and Gerolin, Augusto},
  journal={arXiv preprint arXiv:2007.00976},
  title={Optimal Transport losses and Sinkhorn algorithm with general convex regularization},
  year={2020},
  month={Jul.}
}

@techreport{InriaRRnicaise,
  author = {Maxime Nicaise and Yaiza Bermudez and Samir M. Perlaza},
  title = {On Optimal Transport with f-Divergence Regularization},
  year = {2026},
  number = {RR-9607},
  institution = {INRIA, Centre Inria d'Université Côte d'Azur},
  address = {Sophia Antipolis, France},
  month = {January}
}
\end{document}